\numberwithin{equation}{section} 
\numberwithin{table}{section}
\theoremstyle{plain}
\newtheorem{theorem}{Theorem}[section]
\newtheorem{lemma}[theorem]{Lemma}
\newtheorem{remark}[theorem]{Remark}
\newtheorem{definition}[theorem]{Definition}
\def\R{\mathbb{R}}
\def\Z{\mathbb{Z}}
\def\C{\mathbb{C}}
\def\N{\mathbb{N}}
\begin{document}

\title[The first AB eigenvalue on the disk]{Multiple Aharonov--Bohm eigenvalues: the case of the first eigenvalue on the disk}

\author{Laura Abatangelo}
\address{Dipartimento di Matematica e Applicazioni, Università degli Studi di Milano-Bicocca,
\newline \indent  Via Cozzi 55, 20125 Milano, Italy.}
\email{laura.abatangelo@unimib.it}

\thanks{}

\date{\today}

\begin{abstract}
  It is known that the first eigenvalue for Aharonov--Bohm operators with half-integer circulation
  in the unit disk is double 
  if the potential's pole is located at the origin. 
  We prove that in fact it is simple as the pole $a\neq 0$. 
\end{abstract}

\subjclass[2010]{35J10, 35J75, 35P99, 35Q40, 35Q60}

\keywords{Magnetic Schr\"{o}dinger operators,
  Aharonov--Bohm potential, Spectral theory, genericity}

\maketitle

\section{Introduction}

In the present paper we are interested in the 
spectral properties of Schrödinger operators with Aharonov--Bohm 
vector potential (see e.g. \cite{AB,MOR,AdamiTeta1998}),
acting on functions $u \, : \, \R^2 \to \C$, i.e.
\begin{equation} \label{eq:operator}
(i\nabla + A_a^\alpha)^2 u := -\Delta u + 2 i A_a^\alpha \cdot 
\nabla u + |A_a^\alpha|^2 u,
\end{equation}
where the vector potential is singular at the point $a$ and takes the form
\begin{equation}
A_a^\alpha (x_1, x_2) = \alpha 
\left( - \frac{x_2 - a_2}{(x_1 - a_1)^2 + (x_2 - a_2)^2}, 
\frac{x_1 - a_1}{(x_1 - a_1)^2 + (x_2 - a_2)^2} \right).
\end{equation}
We address here its eigenvalues in the unit disk in the special case when circulation $\alpha=\tfrac12$.

In order to pose the problem, we address here the general functional setting. 
If $\Omega \subset \R^2$ is open, bounded and simply connected, for $a \in \Omega$, 
we define the functional space $H^{1,a}_0(\Omega,\C)$ as the completion of 
$C^\infty_c
(\Omega \setminus \{a\}, \C)$
with respect to the norm
\[
\| u \|_{H^{1,a}(\Omega,\C)} := \left( \| \nabla u \|_{L^2(\Omega,\C^2)}^2 
+ \| u \|_{L^2(\Omega,\C)}^2 + \left\| \frac{u}{|x-a|} 
\right\|_{L^2(\Omega,\C)}^2 \right)^{1/2}.
\]
When the circulation of the vector potential is not an integer, i.e.\ $\alpha \in \R \setminus \Z$, 
the latter norm is equivalent to the norm
\begin{equation*}
\| u \|_{H^{1,a}(\Omega,\C)} = \left( \left\| ( i \nabla + A_a^\alpha ) u 
\right\|^2_{ L^2(\Omega,\C) } + \| u \|^2_{ L^2(\Omega,\C) } \right)^{\!\!1/2},
\end{equation*}
by the Hardy type inequality proved in \cite{LaptevWeidl1999} (see also 
\cite{Balinsky} and
\cite[Lemma 3.1 and Remark 3.2]{FelliFerreroTerracini2011})
\[
\int_{ D_r(a) } | (i\nabla + A_a^\alpha) u |^2 \,dx \geq \Big( \min_{j \in \Z} 
|j - \alpha| \Big)^2 \int_{ D_r(a) } \frac{ |u(x)|^2 }{|x-a|^2} \, dx,
\]
which holds for all $r > 0$, $a \in \R^2$  and $u \in H^{1,a}(D_r(a),\C)$. Here we 
denote as $D_r(a)$ the disk of center $a$ and radius $r$.

By a Poincaré type inequality, see e.g.\ \cite[A.3]{AbatangeloFelliNorisNys2016}, we 
can consider the equivalent norm on $H^{1,a}_0(\Omega, \C)$
\[
\| u \|_{H^{1,a}_0(\Omega,\C)} := \left( \| (i \nabla + A_a^\alpha) u 
\|_{L^2(\Omega,\C^2)}^2 \right)^{1/2}.
\]

We set the eigenvalue problem 
\begin{equation}\label{eq:eige1}
 \begin{cases}
  (i\nabla + A_a^\alpha)^2 \varphi = \lambda \varphi &\text{in }\Omega\\
  \varphi=0 &\text{on }\partial \Omega,
 \end{cases}
\end{equation}
in a weak sense, that is $\lambda\in\C$ is an eigenvalue of
problem \eqref{eq:eige1} if there exists $u\in
H^{1,a}_{0}(\Omega,\C)\setminus\{0\}$ (called eigenfunction) such that
\[
\int_\Omega (i\nabla + A_a^\alpha) u 
\cdot \overline{(i \nabla + A_a^\alpha) v} \, dx 
= \lambda \int_\Omega u \overline{v} \,dx 
\quad \text{ for all } v \in H^{1,a}_{0}(\Omega,\C).
\]
From classical spectral theory, 
for every $(a,\alpha) \in \Omega 
\times \R$, the eigenvalue problem \eqref{eq:eige1}
admits a diverging 
sequence of real and positive eigenvalues $\{\lambda_k{(a,\alpha)}\}_{k\geq 1}$ 
with finite multiplicity. 
These eigenvalues 
also have a variational characterization given by 
\begin{equation} \label{eq:var-char}
\lambda_k(a, \alpha) = \min \Big\{ \sup_{u \in W_k \setminus\{0\}} 
\frac{ \int_{\Omega} |(i \nabla + A_a^\alpha) u|^2 }{\int_{\Omega} |u|^2 }
\, : \,  W_k \text{ is a linear $k$-dim subspace of } H^{1,a}_0(\Omega, \C), 
\Big\}.  
\end{equation}

The paper \cite{AbatangeloNys2017} started the study of multiple eigenvalues 
of this operator with respect both to the position of the pole $a\in\Omega$ 
and the circulation $\alpha\in(0,1)$. It shows that 
multiple eigenvalues in general occur, even if 
under suitable assumptions
they are very rare locally with respect to the two parameters.
Here we just mention that these assumptions rely on the local behavior of the corresponding
eigenfunctions. Moreover, to the best of our knowledge, no result is available about this rareness 
globally with respect to the two parameters, yet 
(on this general theme the interested reader can see \cite{Teytel1999}).

As already mentioned, in this paper we consider the eigenvalue problem 
when $\Omega$ is the unit disk
$D:=\{ (x_1,x_2)\in\R^2:\ {x_1}^2 + {x_2}^2 <1\}$
and the circulation $\alpha=\tfrac12$, i.e. the problem 
\begin{equation}\label{eq:eige}
 \begin{cases}
  (i\nabla + A_a)^2 \varphi = \lambda \varphi &\text{in }D\\
  \varphi=0 &\text{on }\partial D.
 \end{cases}
\end{equation}
Throughout the paper we will erase the index $\alpha$, since it is fixed $\alpha=\tfrac12$. 
Because of this choice, 
in view of the correspondance between the magnetic 
problem and a real Laplacian problem on a double covering manifold 
(see \cite{HHOO1999,NT2010}), 
the operator \eqref{eq:operator} behaves as a \emph{real} operator.
As a consequence, 
the nodal set of the eigenfunctions of operator \eqref{eq:operator} 
(i.e.\ the set of points where they vanish) is made of curves and 
not of isolated points as we could expect for complex valued functions. 
More specifically, the magnetic eigenfunctions always have an \emph{odd} 
number of nodal lines ending at the singular point $a$, and therefore 
at least one. 

In particular, we are going to focus our attention on the first eigenvalue to problem \eqref{eq:eige}
and to study its multiplicity as the pole is moving from the origin around the disk. 
One can prove that this situation fulfills the assumptions of 
\cite[Theorem 1.6]{AbatangeloNys2017}, 
so that we know that the origin is {\em locally} the only point where the first eigenvalue is double. 
The main result of the paper is then the following

\begin{theorem}\label{t:main}
 Let $\lambda(a)$ be the first eigenvalue of Problem \eqref{eq:eige}, 
 i.e. $\lambda(a) := \lambda_1(a,\tfrac12)$. 
 It is simple if and only if $a\neq0$. 
\end{theorem}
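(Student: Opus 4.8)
The plan is to split the statement into its two implications. The ``only if'' direction---that $\lambda(a)$ is \emph{not} simple when $a=0$---is already known: the rotational symmetry of the problem when the pole sits at the origin forces the first eigenvalue to be double, as recalled in the abstract and the introduction. So the content of the theorem is the ``if'' direction: $\lambda(a)$ is simple for every $a\neq 0$. By \cite[Theorem 1.6]{AbatangeloNys2017}, which applies here, we already know simplicity holds for $a$ in a punctured neighbourhood of the origin; the task is to propagate this to \emph{all} $a\in D\setminus\{0\}$.

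\medskip

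The strategy I would adopt is a continuation/connectedness argument combined with a contradiction obtained from the nodal picture. Consider the set $S:=\{a\in D\setminus\{0\} : \lambda(a) \text{ is simple}\}$. Since eigenvalues depend continuously on $a$ and a simple eigenvalue stays simple and depends real-analytically on the pole (here one uses the reduction to a real operator on the double cover, so that analytic perturbation theory in the sense of Kato applies with real eigenprojections), $S$ is open in $D\setminus\{0\}$ and nonempty. The heart of the proof is to show $S$ is also closed in $D\setminus\{0\}$: if $a_n\to a_\ast\in D\setminus\{0\}$ with each $\lambda(a_n)$ simple but $\lambda(a_\ast)$ double, I would derive a contradiction. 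At $a_\ast$ the two-dimensional first eigenspace, being invariant under the real structure, is spanned by two real eigenfunctions $\varphi_1,\varphi_2$ on the double cover; any real linear combination is again a first eigenfunction and hence---being a \emph{first} eigenfunction---must have a very restricted nodal set (a single nodal line issuing from $a_\ast$ to the boundary, by the ``odd number of nodal lines, at least one'' principle together with Courant-type nodal-domain bounds for the first eigenvalue). One then counts: a two-parameter family of first eigenfunctions forces the nodal line to rotate around the pole, and following it through a full family produces an eigenfunction with an even number of nodal lines at $a_\ast$, or with two nodal lines bounding three nodal domains, contradicting the first-eigenvalue constraint. Equivalently, one can argue that double-ness of $\lambda(a_\ast)$ would make $a_\ast$ a local symmetry point, but the only such point is the origin.

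\medskip

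To make the nodal counting rigorous I would use the Alessandrini-type local structure of nodal sets for solutions of the magnetic equation (vertices of even order at regular points, odd order at the pole $a$), together with the Euler-formula/Courant bound: a first eigenfunction has exactly one nodal domain on the disk minus its single nodal arc, hence the nodal set is a single arc from $a_\ast$ to $\partial D$. In the two-dimensional eigenspace, normalising and letting the coefficient ratio run over the circle $\mathbb{R}\cup\{\infty\}$, the endpoint of this arc on $\partial D$ traces a continuous map $S^1\to\partial D\cong S^1$; computing its degree (or observing that $\varphi$ and $-\varphi$ give the same nodal set, so the effective parameter is $\mathbb{RP}^1$) yields a parity obstruction incompatible with the single-arc structure unless the eigenspace were one-dimensional. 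This is the step I expect to be the main obstacle: one must rule out that the nodal arcs of the various eigenfunctions in the pencil are \emph{all} the same arc (which would not immediately contradict anything) by using the linear independence of $\varphi_1,\varphi_2$ to show their nodal arcs are genuinely distinct and sweep out a neighbourhood of $a_\ast$, and one must handle the possible coincidence $\lambda_1(a_\ast)=\lambda_2(a_\ast)$ degenerating into higher multiplicity. Once closedness is established, connectedness of $D\setminus\{0\}$ forces $S=D\setminus\{0\}$, which is Theorem~\ref{t:main}.
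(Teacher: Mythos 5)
Your reduction to the ``if'' direction and the open/nonempty parts of the connectedness argument are fine, but the closedness step --- the heart of your proof --- has a genuine gap. The contradiction you propose to extract from a two-dimensional first eigenspace (each member of the pencil having a single nodal arc, plus a degree/parity obstruction for the endpoint map $\mathbb{RP}^1\to\partial D$) cannot exist, because the same argument would apply verbatim at $a_\ast=0$, where the first eigenvalue \emph{is} double: there the eigenfunctions are $e^{it/2}J_{1/2}(\sqrt{\lambda_0}\,r)\bigl(A\cos(t/2)+B\sin(t/2)\bigr)$ (Lemma \ref{l:doubleeigenf} with $n=1$), each has exactly one nodal radius, and as $[A:B]$ runs over $\mathbb{RP}^1$ the boundary endpoint sweeps $\partial D$ once --- a perfectly consistent picture with no topological obstruction. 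Nodal/Courant-type counting for Aharonov--Bohm operators with one pole bounds the multiplicity of the first eigenvalue by $2$, not by $1$, so no argument of this type alone can distinguish $a_\ast\neq0$ from $a_\ast=0$; your fallback remark that ``the only local symmetry point is the origin'' is not a proof, since double eigenvalues do not require symmetry.

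The paper closes exactly this gap by using structure specific to the disk rather than nodal topology. By rotational invariance one takes $a=(t,0)$; by the isospectrality result of Lemma \ref{l:isospectrality} the spectrum is the union of the Dirichlet--Neumann and Neumann--Dirichlet spectra on the half-disk $D^+$, and these are respectively non-decreasing and non-increasing in $t$ (Remark \ref{r:monotonicity}), giving $\lambda_1(t)=\lambda_1^{ND}(t)$ and $\lambda_2(t)=\lambda_1^{DN}(t)$ on $[0,1)$. The remaining danger --- that the two branches coincide on an interval --- is excluded near $t=0$ by Kato analytic perturbation theory together with the Feynman--Hellmann formula \eqref{eq:FHformula2}, which, via the explicit Bessel expansions of the eigenfunctions, yields $\mu_1'(0)=-\tfrac\pi2 B^2<0$ and $\mu_2'(0)=\tfrac\pi2 A^2>0$; monotonicity then propagates the strict inequality $\lambda_1^{ND}(t)<\lambda_1^{DN}(t)$ from small $t>0$ to all $t\in(0,1)$. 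If you wish to salvage a continuation flavour, it is this monotonicity of the two half-disk branches, not a nodal count, that plays the role of your ``closedness''.
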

We recall that the necessary condition is still known (see \cite{BNHHO09}). 
The new result is in fact the sufficient condition.  
The proof relies essentially in two steps. 
Firstly, we observe that 
eigenvalue functions are radial functions. 
Thanks to the local analytic regularity of eigenvalues 
with respect to analytic perturbations of the problem, 
the double eigenvalue for $a=0$ immediately splits in two locally analytic branches, 
which a priori can be the same.  
We will show that in fact they are really different by means 
of their Taylor expansion's first terms. 
The first derivatives of the two branches at the origin can be computed in terms of the
corresponding eigenfunctions' asymptotic expansions 
in the spirit of \cite{AbatangeloNys2017}. This is the content of Section \ref{sec:split}.  

From a technical point of view,
the disk gives us chances 
to compute eigenfunctions explicitly.
This can be done by reducing problem \eqref{eq:eige} to a suitable
weighted Laplace eigenvalue problem on the double covering and thanks to a certain spectral 
equivalence between Problem \ref{eq:eige} and suitable Laplace eigenvalue problems with mixed boundary conditions
(see Section \ref{sec:explicit}). 
This is enough to prove that the first derivatives of the two aforementioned analytic branches 
computed at the origin are different, 
in particular with opposite sign, thus concluding Section \ref{sec:split}.

The proof is concluded in Section \ref{sec:monotone} thanks to the continuity and monotonicity 
of the two branches up to the boundary of the domain.

\subsection{Motivations}

The interest in Aharonov-Bohm operators with half-integer circulation
$\alpha=\tfrac12$ is motivated by the fact that nodal domains of their 
eigenfunctions are strongly related to spectral
minimal partitions of the Dirichlet Laplacian, i.e. partitions of the domain minimizing the largest of the first
eigenvalues on the components, in the special case when they present points of odd
multiplicity (see \cite{BNHHO09}). We refer to papers \cite{BNH11, BNL14,
  H10, HHOO1999, HHO10, HHO13, HHOT09, HHOT10', HHOT10} for details on the deep
relation between behavior of eigenfunctions, their nodal domains, and
spectral minimal partitions.
Related to this, the investigation carried
out in \cite{AbatangeloFelli2015,AbatangeloFelli2016,AbatangeloFelliLena2016,BonnaillieNorisNysTerracini2014,Lena2015,NorisNysTerracini2015} 
highlighted a strong connection
between nodal properties of eigenfunctions and asymptotic expansion of
the function which maps the position of the pole $a$ in the domain to eigenvalues of the operator $(i\nabla +A_a)^2$
(see also \cite[Section 3]{Abatangelo2016} for a brief overview).

The interest in the case of disk comes from the seminal papers \cite{HHO10} and 
\cite{BonnaillieHelffer2013},
where the so-called {\em Mercedes Star Conjecture} is introduced and discussed . 
Roughly speaking, the conjecture evokes that the spectral minimal 3-partition for 
the disk is in fact the {\em Mercedes Star} partition (see \cite[Figure 1]{BonnaillieHelffer2013}). 

For what concerns us, the disk gives us the opportunity to begin to tackle the interesting question 
about how rare multiple eigenvalues are with respect to the position of the pole 
globally in the domain. This is a first contribution to carry on the analysis started in 
\cite{AbatangeloNys2017}. 
On the other hand, 
the present paper is not dealing directly with the aforementioned conjecture, 
but it presents arguments which may be useful towards it.
Finally, Theorem \ref{t:main} validates numerical 
simulations presented in \cite[Figure 1]{BonnaillieHelffer2013} for the first eigenvalue.

\section{Explicit eigenfunctions and eigenvalues}\label{sec:explicit}

The aim of this section is exploiting the symmetry of the disk in order to deduce 
peculiar features of eigenvalues to Problem \eqref{eq:eige}. Firstly, we recall that 
the map $a\mapsto \lambda_k(a)$ is a radial function for any $k\in\N\setminus \{0\}$.

\subsection{Eigenfunctions in the double covering}\label{s:eigedoublecovering}

In the papers \cite[Lemma 3.3]{HHOO1999} and \cite[Section 3]{NT2010}
it is shown that in case of half-integer circulation the considered operator is equivalent to the
standard Laplacian in the double covering.
We then briefly recall  
some basic facts about Aharonov--Bohm operators.
For any 
$a \in \mathbb{R}^2$, we define $\theta_a \, : \, \mathbb{R}^2 \setminus 
\{ a\} \to [0,2\pi)$ the polar angle centered at $a$ such that
\begin{equation} \label{eq:polar-angle-a}
\theta_a(a + r( \cos t, \sin t)) = t, \quad \text{ for } t \in [0, 2 \pi).
\end{equation}
Thus, it results (see \cite{FelliFerreroTerracini2011,HHOO1999,AbatangeloNys2017} for deeper explanations)
that $2 A_a$ is gauge equivalent to $0$, as 
$
2 A_a = - i e^{- i \theta_a} \nabla e^{i \theta_a} = \nabla \theta_a.
$ 
We introduce the following antilinear and antiunitary operator
\[
K_a u = e^{i \theta_a} \overline{u}.
\]
which 
depends on the position of the pole $a\in \Omega$ through the angle $\theta_a$.
It results that $(i \nabla + A_a)^2$ and $K_a$ commute.
The restriction of the scalar product to $L^2_{K_a}(\Omega):= \{ u \in L^2(\Omega,\C) \, : \, K_a u = u \}$ gives 
it the structure of a real Hilbert space and
commutation implies that eigenspaces are stable under the action of $K_a$. Then
we can find a basis of $L^2_{K_a} (\Omega)$ formed by $K_a$-real eigenfunctions of 
$(i \nabla + A_a)^2$.
Being allowed to consider $K_a$-real eigenfunctions of $(i \nabla + A_a)^2$ 
allows to reduce the analysis to the real operator $(i \nabla + A_a)^2_{L^2_{K_a}(\Omega)}$ 
in the real space $L^2_{K_a}(\Omega)$.

\begin{definition}(\cite[Lemma 2.3]{BonnaillieNorisNysTerracini2014}, \cite[Lemma 3.3]{HHOO1999})\label{d:doublecov}
 Let $\Omega\subset \R^2$ be an open simply connected and bounded set. 
 Let $a\in \Omega$ be the pole of the operator. 
 The {\em double covering} of $\Omega$ is the set 
 \[
  \tilde \Omega :=\{ y\in\C:\ y^2+a \in \Omega \}.
 \]
\end{definition}

\begin{lemma}(\cite[Lemma 2.3]{BonnaillieNorisNysTerracini2014})\label{l:bnnt}
 Let $\theta $ denote the angle of the polar coordinates in $\R^2$. 
 If $\varphi$ is a $K_0$-real eigenfunction of the problem \eqref{eq:eige} for $a=0$,
 then the function 
 \[
  \psi(y) := e^{-i\theta(y)} \varphi(y^2) \text{ defined in }\tilde D
 \]
is real valued and it is a solution to the problem
\begin{equation}\label{eq:eigedouble}
\begin{cases}
 -\Delta \psi = 4\lambda |y|^2\, \psi &\text{in }\tilde D\\
 \psi=0 &\text{on }\partial \tilde D.
 \end{cases}
\end{equation}
\end{lemma}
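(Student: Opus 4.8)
The plan is to verify the two claimed properties of $\psi$ directly, exploiting the construction of the double covering $\tilde D = \{ y \in \C : y^2 \in D \}$ (here $a = 0$) and the $K_0$-reality of $\varphi$. First I would establish that $\psi$ is real-valued: by hypothesis $K_0 \varphi = e^{i\theta}\overline{\varphi} = \varphi$, so $\overline{\varphi(z)} = e^{-i\theta(z)}\varphi(z)$ for $z \in D$. Writing $z = y^2$ and noting that $\theta(y^2) = 2\theta(y) \pmod{2\pi}$, one gets $e^{i\theta(y^2)} = e^{2i\theta(y)}$, hence $\overline{\psi(y)} = \overline{e^{-i\theta(y)}}\,\overline{\varphi(y^2)} = e^{i\theta(y)} e^{-i\theta(y^2)} \varphi(y^2) = e^{i\theta(y)} e^{-2i\theta(y)}\varphi(y^2) = e^{-i\theta(y)}\varphi(y^2) = \psi(y)$. (The branch-cut ambiguity in $\theta$ is harmless: a jump of $2\pi$ in $\theta(y)$ produces a jump of $4\pi$ in $2\theta(y)$, under which all exponentials are invariant, so $\psi$ is well-defined and continuous on $\tilde D$ — this is exactly the point of passing to the double cover.)

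Next I would check the equation. The map $F(y) = y^2$ is conformal away from $0$, with $|F'(y)|^2 = 4|y|^2$ and $|\det DF(y)| = 4|y|^2$, so the Laplacian transforms as $\Delta_y (u \circ F) = 4|y|^2 \, (\Delta_z u)\circ F$ for any $u$. The subtlety is the extra gauge factor $e^{-i\theta(y)}$. The clean way is to use that $e^{i\theta_0/2}$ (defined on the double cover) conjugates the Aharonov--Bohm operator to the plain Laplacian: formally $(i\nabla + A_0)^2 = e^{i\theta_0/2}(-\Delta)e^{-i\theta_0/2}$ on $\frac12$-circulation data, because $2A_0 = \nabla\theta_0$. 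Pulling this identity back through $F$ and using $\theta_0(y^2) = 2\theta(y)$ so that $e^{-i\theta_0(y^2)/2} = e^{-i\theta(y)}$, one converts the eigenvalue equation $(i\nabla + A_0)^2\varphi = \lambda\varphi$ in $D$ into $-\Delta_z (e^{-i\theta_0/2}\varphi) = \lambda \, e^{-i\theta_0/2}\varphi$, and then the conformal change of variables gives $-\Delta_y \psi = 4\lambda |y|^2 \psi$ in $\tilde D$. The boundary condition is immediate: $y \in \partial\tilde D$ iff $y^2 \in \partial D$, where $\varphi = 0$, hence $\psi = 0$ there.

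The only genuine obstacle is regularity/well-definedness near the singular point $y = 0$ (corresponding to the pole $z = 0$). One must check that $\psi$, a priori defined only on $\tilde D \setminus \{0\}$, extends across $0$ and that the equation holds there in the weak $H^1_0$ sense rather than merely classically on the punctured disk. This follows from the Hardy-type inequality quoted in the introduction: membership of $\varphi$ in $H^{1,0}_0(D,\C)$ forces $\varphi/|z| \in L^2$, which under $z = y^2$ translates into enough integrability of $\psi$ and $\nabla\psi$ near $0$ to conclude that $\psi \in H^1_0(\tilde D)$ and that the point $y=0$ carries no distributional mass (the $4|y|^2$ weight degenerates there, which only helps). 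I would therefore phrase the verification of the PDE weakly, testing against $v \in C^\infty_c(\tilde D)$, pulling $v$ back to an (even, $K_0$-compatible) test function on $D\setminus\{0\}$, and invoking the weak formulation of \eqref{eq:eige}; the capacity of a point in $\R^2$ being zero takes care of the removable singularity. Everything else is the routine conformal-change-of-variables bookkeeping indicated above, and the cited references \cite{HHOO1999, BonnaillieNorisNysTerracini2014} supply the details.
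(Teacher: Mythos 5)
The paper does not prove this lemma at all: it is quoted verbatim from \cite[Lemma 2.3]{BonnaillieNorisNysTerracini2014}, so there is no internal proof to compare against. Your argument is correct and is essentially the standard one used in that reference — $K_0$-reality plus $\theta(y^2)=2\theta(y) \pmod{2\pi}$ gives $\overline{\psi}=\psi$, the gauge identity $(i\nabla+A_0)^2=e^{i\theta_0/2}(-\Delta)e^{-i\theta_0/2}$ combined with the conformal factor $|F'|^2=4|y|^2$ gives the weighted equation, and the Hardy inequality together with the null capacity of a point handles $y=0$. The only detail worth tightening is the test-function pullback: since $\psi(-y)=-\psi(y)$, the relevant test functions on $\tilde D$ are the ones odd under the deck transformation (these are exactly the ones of the form $e^{-i\theta(y)}v(y^2)$ with $v$ admissible for \eqref{eq:eige}), not even ones.
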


The second basic special feature of the disk is stated in the following

\begin{lemma}\label{l:doubledisk}
 When $a=0$, the double covering of the unit disk $D$ 
 can be identified with the twofold unit disk $D$.
\end{lemma}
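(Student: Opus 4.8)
The plan is simply to unwind Definition \ref{d:doublecov} in the special case $\Omega = D$, $a = 0$. With these choices the double covering is
$\tilde D = \{ y \in \C : y^2 \in D \}$, and since $D$ is the open unit disk the defining condition reads $|y^2| < 1$, that is $|y|^2 < 1$, equivalently $|y| < 1$. Hence $\tilde D$ coincides, \emph{as a subset of the plane}, with $D$ itself; this is the content of the identification.

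Next I would make precise in which sense this set carries the structure of a twofold cover. The covering projection is $p \colon \tilde D \to D$, $p(y) = y^2$; on $\tilde D \setminus \{0\}$ it is a smooth, surjective, two-to-one local diffeomorphism with $p^{-1}(z) = \{ \pm \sqrt z \}$, so it is a genuine topological double covering of $D \setminus \{0\}$, the nontrivial deck transformation being $y \mapsto -y$, and the origin being the single branch point. Writing $y = \rho\, e^{i\phi}$ with $\rho \in [0,1)$ and $\phi \in [0,2\pi)$, one has $p(y) = \rho^2 e^{2 i \phi}$: as $\phi$ runs over $[0,2\pi)$ the image winds around $D$ twice, which is precisely the meaning of ``the twofold unit disk''.

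Since nothing here goes beyond elementary manipulations with the modulus and the explicit map $y \mapsto y^2$, there is no real obstacle. The only point deserving a word of care is to state clearly that the identification is of $\tilde D$ with $D$ \emph{together with} the branched covering map $p$ — and, if one wishes to connect it with Lemma \ref{l:bnnt}, to observe that the Jacobian factor $|p'(y)|^2 = 4|y|^2$ is exactly the weight appearing in \eqref{eq:eigedouble}, so that the pull-back through $p$ of Problem \eqref{eq:eige} on $D$ is Problem \eqref{eq:eigedouble} on the same disk $D$.
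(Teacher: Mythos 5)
Your proposal is correct and rests on the same elementary computation as the paper's own proof, namely that $|y^2|=|y|^2<1$ if and only if $|y|<1$ (the paper phrases it via $y_1^2+y_2^2=(x_1^2+x_2^2)^2$ in coordinates, but it is the same identity). The additional remarks on the branched covering map $y\mapsto y^2$ and the weight $4|y|^2$ are not in the paper's proof but are consistent with Lemma~\ref{l:bnnt} and only clarify the meaning of ``twofold''.
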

\begin{proof}
By Definition \ref{d:doublecov}, the double covering of the unit disk $D$ is
 \[
  \Omega_0:=\{ y\in\C:\ y^2\in D \}. 
 \]
 If we identify $\C$ with $\R^2$ in the standard way and consider the polar coordinates 
 $(x_1,x_2)= \rho(\cos \theta,\sin\theta)$ we need that 
 \[
  (y_1,y_2)= \rho^2(\cos 2\theta,\sin2\theta)\in D.
 \]
 Then, observing that $y_1={x_1}^2 - {x_2}^2$ and $y_2=2x_1x_2$, a simple computation shows that 
 ${y_1}^2 + {y_2}^2 = ({x_1}^2 + {x_2}^2)^2 <1$.
\end{proof}

Thanks to Lemma \ref{l:doubledisk}, we are in position to have an explicit expression 
of eigenfunctions to Problem \eqref{eq:eige} by means of Bessel and trigonometric functions.

\begin{lemma}\label{l:doubleeigenf}
 If $\lambda_0$ is an eigenvalue of the problem \eqref{eq:eigedouble}, then it is double and 
 its 
 eigenfunctions take the form
 \[
 \psi(\rho\cos\theta,\rho\sin\theta)=
 A\,J_{n/2}(\sqrt{\lambda_0}\rho^2) \cos(n\theta)\  +\ 
B\,J_{n/2}(\sqrt{\lambda_0}\rho^2)\sin(n\theta),
\quad y=(\rho\cos\theta,\rho\sin\theta)\in \tilde D
 \]
 with $A,B\in\R$ and for some $n\in\N\setminus\{0\}$. 
 
 Coming back to the original problem \eqref{eq:eige} on the original domain $D$, 
 $\lambda_0$ is a double eigenvalue of the problem \eqref{eq:eige} and its eigenfunctions 
 take the form 
 \begin{equation}\label{eq:coeff2}
  \varphi(r\cos t,r\sin t)= e^{i\frac t2} J_{n/2}(\sqrt{\lambda_0}r) \left(A\,\cos\big(n\frac t2\big) + B\,\sin\big(n\frac t2\big)\right)
 \quad x=(r\cos t,r\sin t)\in D.
 \end{equation}
\end{lemma}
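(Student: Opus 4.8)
The plan is to solve \eqref{eq:eigedouble} by separation of variables, which is legitimate because, by Lemma~\ref{l:doubledisk}, the double covering $\tilde D$ is the unit disk $\{\rho<1\}$ in the polar coordinates $(\rho,\theta)$ of the variable $y$, and the operator $-\Delta-4\lambda_0|y|^2$ commutes with rotations about the origin (the weight $|y|^2$ being radial). Writing $\psi(\rho\cos\theta,\rho\sin\theta)=R(\rho)\,T(\theta)$ and inserting into $-\Delta\psi=4\lambda_0\rho^2\psi$, separation of variables yields $T''=-n^2T$ and $\rho^2R''+\rho R'+(4\lambda_0\rho^4-n^2)R=0$. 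Single-valuedness of $\psi$ on the disk forces $n\in\N$ and $T\in\operatorname{span}\{\cos n\theta,\sin n\theta\}$; this already produces the stated $A\cos n\theta+B\sin n\theta$ shape and shows that, for each admissible $n$, the eigenspace is at least two-dimensional.

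For the radial equation I would substitute $s=\rho^2$, which turns it into Bessel's equation of order $n/2$ with parameter $\sqrt{\lambda_0}$; its solutions are the combinations of $J_{n/2}(\sqrt{\lambda_0}\rho^2)$ and a second, linearly independent solution that is singular at $\rho=0$. Requiring $\psi\in H^1_0(\tilde D)$ — equivalently, by Lemma~\ref{l:bnnt} and the operator equivalence recalled before it, that $\psi$ be the transform of an eigenfunction in $H^{1,0}_0(D)$ — discards the singular solution, so $R(\rho)=J_{n/2}(\sqrt{\lambda_0}\rho^2)$ up to a constant, and the Dirichlet condition at $\rho=1$ becomes the quantization $J_{n/2}(\sqrt{\lambda_0})=0$. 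The admissible $n$ are precisely the odd ones: under the deck transformation $y\mapsto-y$ one has $\theta\mapsto\theta+\pi$, while $\psi(y)=e^{-i\theta(y)}\varphi(y^2)$ is odd, $\psi(-y)=-\psi(y)$; since $\cos n\theta$ and $\sin n\theta$ get multiplied by $(-1)^n$, only $n$ odd survives. In particular $n\ge1$ as claimed, which also excludes the radial mode $J_0$ — consistently, $J_0(0)\ne0$ would violate the Hardy condition once pulled back to $D$.

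It remains to upgrade ``at least double'' to ``exactly double''. Decomposing any eigenfunction of \eqref{eq:eigedouble} into Fourier modes in $\theta$ and using that, by rotation invariance, each mode is separately an eigenfunction, the eigenspace of a given $\lambda_0$ is $\bigoplus\operatorname{span}\{J_{n/2}(\sqrt{\lambda_0}\rho^2)\cos n\theta,\ J_{n/2}(\sqrt{\lambda_0}\rho^2)\sin n\theta\}$, the sum taken over the odd $n$ with $J_{n/2}(\sqrt{\lambda_0})=0$; its dimension is twice the number of such $n$. The one step that is not a routine computation — and the part I expect to be the main obstacle — is that this number is always one, i.e.\ that $J_{n/2}$ and $J_{m/2}$ share no positive zero for $n\ne m$. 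I would deduce this from the classical fact that $J_\mu$ and $J_\nu$ have no common positive zero when $\mu-\nu$ is a nonzero integer (Bourget's hypothesis, proved by Siegel); for the half-integer orders at hand this is also accessible by elementary means, since each $J_{n/2}$ is an explicit combination of $\sin$, $\cos$ and rational functions.

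Finally I would translate the result back to $D$ through the inverse of the substitution in Lemma~\ref{l:bnnt}: from $\psi(y)=e^{-i\theta(y)}\varphi(y^2)$ one recovers $\varphi(x)=e^{i\theta(y)}\psi(y)$ with $y$ any square root of $x$, and the choice $y=(\sqrt r\cos\tfrac t2,\sqrt r\sin\tfrac t2)$ for $x=(r\cos t,r\sin t)$ gives $|y|=\sqrt r$ and $\theta(y)=\tfrac t2$, hence \[\varphi(r\cos t,r\sin t)=e^{i\frac t2}\,J_{n/2}(\sqrt{\lambda_0}\,r)\Big(A\cos\tfrac{nt}{2}+B\sin\tfrac{nt}{2}\Big),\] which is \eqref{eq:coeff2}; the oddness of $n$ makes the right-hand side independent of the chosen square root, so $\varphi$ is well-defined on $D$. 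Since the correspondence of Lemma~\ref{l:bnnt} is a real-linear bijection between the eigenfunctions of \eqref{eq:eige} and those of \eqref{eq:eigedouble}, $\lambda_0$ is a double eigenvalue of \eqref{eq:eige} too, which completes the argument.
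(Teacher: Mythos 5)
Your proposal is correct and follows the same overall strategy as the paper's proof: separation of variables on $\tilde D$, reduction of the radial part to Bessel's equation of order $n/2$ via $s=\rho^2$, exclusion of the singular solution, quantization $J_{n/2}(\sqrt{\lambda_0})=0$ from the Dirichlet condition, and pull-back through Lemma~\ref{l:bnnt}. Where you go beyond the paper is in making explicit two points it leaves implicit. First, you justify the restriction to \emph{odd} $n$ by the antisymmetry $\psi(-y)=-\psi(y)$ under the deck transformation (since $\theta(-y)=\theta(y)+\pi$); the paper's separation of variables nominally also produces the radial mode $v\equiv C$ and the even modes, which are genuine eigenfunctions of the weighted Dirichlet problem \eqref{eq:eigedouble} on $\tilde D$ but do not arise from problem \eqref{eq:eige} --- so your observation is exactly what is needed to match the statement ``$n\in\N\setminus\{0\}$'' and to make $\varphi$ single-valued on $D$. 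Second, you correctly isolate the real obstacle in upgrading ``at least double'' to ``exactly double'': one must know that $J_{n/2}$ and $J_{m/2}$ share no positive zero for distinct odd $n,m$. The paper handles this only indirectly, by recording the interlacing of zeros for adjacent half-integer orders after the proof and by citing \cite[Proposition 5.3]{BNHHO09} for the exact multiplicity in a more general context; your appeal to the Bourget--Siegel result (or to the elementary closed form of half-integer-order Bessel functions) gives a self-contained substitute. Both refinements are sound and, if anything, make the argument more complete than the one printed in the paper.
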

\begin{proof}
Standard separation of variables $\psi(\rho\cos \theta,\rho\sin \theta)= u(\rho)v(\theta)$ leads to
\begin{align*}
 v(\theta) = C 
 \text{ or } v(\theta) = A\cos (n\theta) + B\sin(n\theta) \text{ for }n\in\N 
\end{align*}
being $A,B,C\in\R$.
The radial part produces a Bessel-type equation which reads
\[
{\rho^2} \frac{d^2 u}{d\rho^2} + {\rho}\frac{d u}{d\rho} + (4\lambda_0 \rho^4 - n^2)u(\rho)=0
\]
whose solutions are given by the so-called modified Bessel functions
$ J_{n/2} (\sqrt{\lambda_0}\rho^2)$ or $ J_{-n/2} (\sqrt{\lambda_0}\rho^2)$
(for the modified Bessel functions, see the book by Watson \cite{Watson}).
From the results in \cite{FelliFerreroTerracini2011,HHOO1999} we know that the eigenfunction 
is regular at the origin, so its radial part will be given in terms of the only $J_{n/2}$.
Imposing the boundary conditions at $\rho=1$, we find $J_{n/2}(\sqrt{\lambda_0})=0$, which means that 
\[
 \lambda_0 = {\alpha_{n/2,k}}^2 \quad \text{for some }k\in \N,
\]
where $\{\alpha_{n/2,k}\}_{k\in\N}$ denote the sequence of zeros of the Bessel function $J_{n/2}$.
This concludes the first part of the statement. 
By virtue of Lemma \ref{l:bnnt} the rest of the statement follows.
\end{proof}

Note that the case of the disk is covered by the paper \cite{BNHHO09}: the fact that every eigenvalue is
double was already provided by \cite[Proposition 5.3]{BNHHO09} in a more general context. 
Nevertheless, this is not the main point we are interested in.

We recall that there is a connection between the zeros of the Bessel functions (to this aim we refer to 
\cite[Chapter XV]{Watson}): in particular, the positive zeros of the Bessel function $J_{\frac n2}$
are interlaced with those of the Bessel function $J_{\frac{n+1}2}$ and by Porter's Theorem 
the positive zeros of $J_{\frac n2}$
are interlaced with those of the Bessel function $J_{\frac{n+2}2}$.
Then, denoting $z_{\frac n2,k}$ the $k$-th zero of the Bessel function $J_{\frac n2}$, we have
\[
 0 < z_{\frac12,1} < z_{\frac32,1} < z_{\frac52,1} < z_{\frac12,2} < z_{\frac72,1} < \ldots
\]

\begin{remark}\label{r:ms}
The first case is then $(n,k)=(1,1)$ and 
it corresponds to the double first eigenvalue for the Aharonov--Bohm
operator with half-integer circulation and pole at the origin. 

The second case is $n=3$ and $k=1$, which produces the double third eigenvalue. 
\end{remark}

\subsection{Isospectrality and consequences on eigenvalues}
We introduce two auxiliary problems. Let us denote 
$D^+:= \{ (x_1,x_2)\in D:\ x_2>0 \}$. 
\begin{definition}(\cite{Lena2015})\label{d:auxiliary}
 The two problems
 \begin{align}\label{eq:dnnd}
  \begin{cases}
   -\Delta u = \lambda u &\text{in }D^+\\
   u=0 &\text{on }\partial D^+ \setminus (t,1]\times\{0\}\\
   \frac{\partial u}{\partial \nu} =0 &\text{on }(t,1]\times\{0\}
  \end{cases}
  \quad 
  \begin{cases}
   -\Delta u = \lambda u &\text{in }D^+\\
   u=0 &\text{on }\partial D^+ \setminus [-1,t)\times\{0\} \\
   \frac{\partial u}{\partial \nu} =0 &\text{on }[-1,t)\times\{0\} 
  \end{cases}
 \end{align}
 are called {\em Dirichlet--Neumann} and {\em Neumann--Dirichlet}
 eigenvalue problem for the Laplacian in the upper half-disk, respectively. 
\end{definition}
We recall the following result proved in \cite{BNHHO09} (see also \cite[Proposition 5.3]{Lena2015}).
\begin{lemma}(\cite{BNHHO09})\label{l:isospectrality}
Let $a=(t,0)$ for $t\in[0,1]$. 
 The set of the eigenvalues of Problem 
 \eqref{eq:eige} $\{\lambda_j(t)\}_{j\geq1}$ is the union (counted with multiplicity) of the 
 sequences $\{\lambda_j^{DN}(t)\}_{j\geq1}$ and $\{\lambda_j^{ND}(t)\}_{j\geq1}$, 
 being $\{\lambda_j^{DN}(t)\}_{j\geq1}$ and $\{\lambda_j^{ND}(t)\}_{j\geq1}$ the set of the eigenvalues 
 of the Dirichlet--Neumann and Neumann--Dirichlet problems \eqref{eq:dnnd} respectively. 
\end{lemma}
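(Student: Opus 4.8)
The plan is to exploit the half-integer circulation together with the mirror symmetry across the $x_1$-axis, on which the pole lies. First I would desingularize: since $\alpha=\tfrac12$ and $2A_a=\nabla\theta_a$, I fix the straight cut $\Sigma:=(t,1]\times\{0\}$ joining $a=(t,0)$ to $\partial D$ and perform the locally defined gauge change $\varphi=e^{i\theta_a/2}w$. Exactly as in the passage to the double covering (cf.\ Lemma~\ref{l:bnnt} and \cite{HHOO1999,NT2010}), this turns \eqref{eq:eige} into the real eigenvalue problem $-\Delta w=\lambda w$ in $D\setminus\Sigma$, $w=0$ on $\partial D$, supplemented by the transmission conditions that \emph{both} $w$ and its normal derivative change sign across $\Sigma$; restricting to $K_a$-real eigenfunctions amounts to taking $w$ real-valued, loses no eigenvalue, and forces $w$ to behave near $a$ like $r^{1/2}$ times a trigonometric function of half the local angle.

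Next I would bring in the reflection $\sigma(x_1,x_2)=(x_1,-x_2)$. Because $a$ and $\Sigma$ lie on the axis, $\sigma$ maps $D\setminus\Sigma$ onto itself, commutes with $-\Delta$, fixes the Dirichlet condition on $\partial D$, and preserves the sign-changing transmission conditions on $\Sigma$. Hence $w\mapsto w\circ\sigma$ is a real-linear involution leaving the eigenvalue problem invariant, and every $\lambda$-eigenspace splits as the direct sum of its $\sigma$-even and $\sigma$-odd subspaces; it therefore suffices to treat eigenfunctions with $w\circ\sigma=\pm w$, which I would analyze on the upper half-disk $D^+$. If $w\circ\sigma=w$, evenness forces $\partial_\nu w=0$ on the uncut segment $(-1,t)\times\{0\}$, while on $\Sigma$ the transmission condition combined with evenness forces $w=0$; together with $w=0$ on the circular part of $\partial D^+$ this says precisely that $w|_{D^+}$ is an eigenfunction of the Neumann--Dirichlet problem in \eqref{eq:dnnd}. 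If $w\circ\sigma=-w$, oddness gives $w=0$ on $(-1,t)\times\{0\}$ and the transmission condition gives $\partial_\nu w=0$ on $\Sigma$, so $w|_{D^+}$ solves the Dirichlet--Neumann problem.

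Conversely I would reflect back: an ND (resp.\ DN) eigenfunction on $D^+$ extends by even (resp.\ odd) reflection across the axis to a function on $D\setminus\Sigma$ satisfying the Dirichlet condition on $\partial D$ and the sign-changing transmission conditions on $\Sigma$ -- the correct $r^{1/2}$-type singular profile at $a$ being automatically generated by the mixed Dirichlet/Neumann corner located there -- hence, via the inverse gauge change, an eigenfunction of \eqref{eq:eige}. The two constructions are mutually inverse real-linear isomorphisms between the $\lambda$-eigenspace of \eqref{eq:eige} and the direct sum of the $\lambda$-eigenspaces of the ND and DN problems, which yields the stated equality of the two pooled sequences counted with multiplicity.

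The point needing the most care is this last matching at the level of function spaces: one has to verify that even/odd reflection across the Neumann/Dirichlet portions of the axis produces an $H^1$ function with exactly the transmission behaviour required of an admissible eigenfunction of \eqref{eq:eige} -- including at the junction point $a$, where the local asymptotics of solutions of the mixed problems must be shown to glue into the expected singular profile -- and, in the other direction, that the restriction to $D^+$ of an $H^{1,a}_0(D,\C)$-eigenfunction is admissible for the two mixed problems. This is exactly where the sharp behaviour of the eigenfunctions near $a$ and near the change-of-type point enters. The endpoint values $t=0$ and $t=1$ are covered by the same argument, $\Sigma$ becoming empty when $t=1$, in which case one of the two mixed problems degenerates to the pure Dirichlet problem on $D^+$.
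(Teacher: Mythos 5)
Your argument is correct and is essentially the standard proof of this isospectrality result: the paper itself offers no proof, only the citation to \cite{BNHHO09}, and the argument there is exactly your gauge transformation along a cut followed by the even/odd reflection decomposition --- indeed your dichotomy ($\sigma$-even gauged functions give the Neumann--Dirichlet problem, $\sigma$-odd give the Dirichlet--Neumann problem) is precisely the content of Lemma \ref{l:isospectrality_eigenf} quoted later in the paper. The one step you leave as a sketch, the $H^1$ matching of the even/odd extensions across the axis at the junction point $a$, is the genuinely technical part, but your identification of the $r^{1/2}$ mixed-corner asymptotics and of the Hardy-type condition in $H^{1,a}_0(D,\C)$ as the mechanism that makes the two eigenspaces correspond is the right one.
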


By virtue of the latter Lemma \ref{l:isospectrality} and the continuity result stated in 
\cite{Lena2015} for Aharonov--Bohm eigenvalues
(see also \cite[Section 10]{DaugeHelffer1993}),
the following result holds true.
\begin{lemma}(\cite{Lena2015}, \cite{DaugeHelffer1993})\label{l:continuity}
Fix $k\in\N\setminus\{0\}$ and denote 
$\lambda_k^{DN}(t)$ ($\lambda_k^{ND}(t)$) 
the $k$-th eigenvalue of the Dirichlet--Neumann problem in \eqref{eq:dnnd} (Neumann--Dirichlet problem, respectively). Then the maps 
\[
 t \mapsto \lambda^{DN}_k(t) \quad t \mapsto \lambda^{ND}_k(t)
 \qquad \text{are continuous in }(-1,1). 
\]
\end{lemma}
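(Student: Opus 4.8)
The plan is to combine the isospectrality decomposition of Lemma~\ref{l:isospectrality} with the continuity of the full Aharonov--Bohm spectrum, but some care is needed because the two sequences $\{\lambda_k^{DN}(t)\}$ and $\{\lambda_k^{ND}(t)\}$ individually may ``cross'' as $t$ varies, while only their merged reordering is directly controlled. First I would recall from \cite{Lena2015} (and \cite[Section~10]{DaugeHelffer1993}) that for every fixed $k$ the map $t\mapsto\lambda_k(t)$ is continuous on $(-1,1)$, where $\lambda_k(t)=\lambda_k((t,0),\tfrac12)$ is the $k$-th Aharonov--Bohm eigenvalue on the disk with pole moving along the horizontal axis; this is the continuity of eigenvalues of $(i\nabla+A_a)^2$ with respect to the pole, which holds since the form domain and the quadratic form depend continuously on $a$ in the appropriate sense, and it is consistent with the variational characterization \eqref{eq:var-char}.

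Next I would exploit the symmetry of the half-disk problems under reflection across the vertical axis. The change of variables $x_1\mapsto -x_1$ maps $D^+$ to itself and interchanges the Dirichlet--Neumann problem at pole $(t,0)$ with the Neumann--Dirichlet problem at pole $(-t,0)$, since it swaps the segment $(t,1]\times\{0\}$ with $[-1,-t)\times\{0\}$. Hence $\lambda_k^{ND}(t)=\lambda_k^{DN}(-t)$ for all $k$ and all $t\in(-1,1)$, so it suffices to prove continuity of $t\mapsto\lambda_k^{DN}(t)$ on $(-1,1)$; the statement for $\lambda_k^{ND}$ then follows. This reduces the problem to a single mixed boundary value problem whose Neumann portion is the segment $(t,1]\times\{0\}$, of length $1-t$ depending continuously on the parameter.

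For the continuity of $t\mapsto\lambda_k^{DN}(t)$ I would argue directly from the min--max characterization. Fixing a reference value $t_0$ and a small perturbation $t$, one constructs a bi-Lipschitz diffeomorphism of $\overline{D^+}$ onto itself, equal to the identity near the curved boundary $\partial D^+\cap\partial D$ and near the origin, that maps the segment $(t,1]\times\{0\}$ to $(t_0,1]\times\{0\}$ and whose Lipschitz constants tend to $1$ as $t\to t_0$; pulling back test functions through this map shows that the DN Rayleigh quotients at $t$ and at $t_0$ differ by a factor $1+o(1)$, and the min--max formula then gives $\lambda_k^{DN}(t)\to\lambda_k^{DN}(t_0)$. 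Alternatively, and more cheaply, one uses Lemma~\ref{l:isospectrality}: the merged, nondecreasingly reordered union of $\{\lambda_j^{DN}(t)\}_j$ and $\{\lambda_j^{ND}(t)\}_j$ is exactly $\{\lambda_j(t)\}_j$, each entry of which is continuous; since by the reflection identity the two subsequences are mirror images of each other, a continuity/counting argument at each $t$ (comparing, for $j$ near the index of interest, how many merged eigenvalues lie below a given level and matching them against the two families) upgrades continuity of the merged sequence to continuity of each individual family.

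The main obstacle is precisely this last disentangling step: continuity of the sorted union does not by itself imply continuity of the summands when eigenvalues of the two families collide, so one must either produce the explicit bi-Lipschitz deformation argument for the mixed problem (which is routine but must be set up carefully near the endpoint $(t,0)$ of the Neumann segment, where the boundary condition changes type and the eigenfunctions have a $|x-(t,0)|^{1/2}$-type singularity) or invoke the domain-monotonicity/perturbation theory for mixed Dirichlet--Neumann problems with a moving interface, for which \cite{DaugeHelffer1993} and \cite{Lena2015} already supply the needed stability statements. I would present the reflection reduction and then cite \cite{Lena2015,DaugeHelffer1993} for the continuity of the single mixed problem, noting that the merged-sequence continuity coming from Aharonov--Bohm side is an independent confirmation.
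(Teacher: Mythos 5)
Your proposal is essentially correct, but note that the paper does not actually prove this lemma: it is stated as a quotation of results from \cite{Lena2015} and \cite[Section 10]{DaugeHelffer1993}, the latter of which treats precisely the continuity of eigenvalues of mixed Dirichlet--Neumann problems with a moving interface. So there is no in-paper argument to match; what you supply is a self-contained substitute. Your reflection identity $\lambda_k^{ND}(t)=\lambda_k^{DN}(-t)$ is exactly what the paper records in Remark \ref{r:monotonicity}, and your main argument --- a bi-Lipschitz deformation of $\overline{D^+}$ carrying the Neumann segment $(t,1]\times\{0\}$ onto $(t_0,1]\times\{0\}$ with Lipschitz constants tending to $1$, combined with the min--max formula \eqref{eq:courantfisher} --- is sound: it works at the level of the form domain $\mathcal H_t$, so the $|x-(t,0)|^{1/2}$ singularity of eigenfunctions at the type-change point is irrelevant (only $H^1$ test functions enter), and the Rayleigh quotients at $t$ and $t_0$ are squeezed between factors $1\pm o(1)$ of one another. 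The only cosmetic slip is the requirement that the map be the identity ``near the origin''; what is actually needed is that it preserve the flat boundary, fix a neighborhood of the corners $(\pm1,0)$, and send the interface point $(t,0)$ to $(t_0,0)$. You are also right to flag that the ``cheap'' route --- continuity of the merged Aharonov--Bohm sequence via Lemma \ref{l:isospectrality} --- does not by itself disentangle the two families at collision points; the paper's phrasing might suggest that deduction, but the citation of \cite{DaugeHelffer1993} is what carries the weight there, so your explicit deformation argument is the more honest and more elementary choice.
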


We observe that in this case the standard Courant--Fisher characterization of eigenvalues establishes
\begin{equation}\label{eq:courantfisher}
 \lambda_k^{DN}(t) = \min_{\stackrel{E\subset\mathcal H_t \text{ subspace}}{\mathrm{dim}E=k}}\ 
 \max_{u\in E \setminus\{0\}}\frac{\int_{\Omega} |\nabla u|^2}{\int_{\Omega} u^2},
\end{equation}
where
\[
\mathcal H_t :=\left\{ u\in H^1(\Omega):\ u=0 \text{ on }\partial \Omega\setminus (t,1)\times \{0\} 
\text{ and }\frac{\partial u}{\partial\nu}=0 \text{ on } (t,1)\times \{0\} \right\},
\]
analogously for $\lambda_k^{ND}(t)$.

\begin{remark}\label{r:monotonicity}
By 
\eqref{eq:courantfisher},
if $-1<t_1 \leq t_2<1$ then $\mathcal H_{t_2}\subseteq \mathcal H_{t_1}$ and then 
$\lambda_j^{DN}(t_2)\geq\lambda_j^{DN}(t_1)$ for any $j\geq1$,
i.e. the function $t\mapsto \lambda_j^{DN}(t)$ is monotone non-decreasing for any $j\geq1$. 
As well, the function $t\mapsto \lambda_j^{ND}(t)$ is monotone non-increasing for any $j\geq1$.

In the case of the disk, 
one can even see it by noting that $\lambda_j^{DN}(t) = \lambda_j^{ND}(-t)$ because of the symmetry of the disk. 
\end{remark}

Another consequence of Lemma \ref{l:isospectrality} is the following result.
\begin{lemma}\label{l:t=1}
Let us consider the problems in \eqref{eq:dnnd}. For $t=1$ we have
$\lambda_1^{DN}(1) = \lambda_2^{ND}(1)$.
\end{lemma}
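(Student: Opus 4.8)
The plan is to specialise the two mixed problems in \eqref{eq:dnnd} to $t=1$, identify them with classical boundary value problems on the half-disk $D^+$, and then read off the first two eigenvalues from the distribution of the positive zeros of the Bessel functions. First, I would observe that when $t=1$ the segment $(1,1]\times\{0\}$ is empty, so the Dirichlet--Neumann problem degenerates to the pure Dirichlet problem for $-\Delta$ on $D^+$, and $\lambda_k^{DN}(1)$ is simply the $k$-th Dirichlet eigenvalue of the half-disk; on the other hand $[-1,1)\times\{0\}$ is the whole open diameter, so the Neumann--Dirichlet problem degenerates to the mixed problem on $D^+$ with homogeneous Neumann condition on the diameter and homogeneous Dirichlet condition on the circular arc, $\lambda_k^{ND}(1)$ being its $k$-th eigenvalue.

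Next, I would use reflection across $\{x_2=0\}$: odd (respectively, even) extension is a bijective isometry intertwining the Laplacians between the form domain of the Dirichlet problem on $D^+$ (respectively, of the above mixed problem) and the subspace of $H^1_0(D)$ made of functions odd (respectively, even) in $x_2$. Since $H^1_0(D)$ splits orthogonally into these two subspaces and $-\Delta$ preserves the splitting, the multiset $\{\lambda_k^{DN}(1)\}_{k\geq1}\cup\{\lambda_k^{ND}(1)\}_{k\geq1}$ coincides with the Dirichlet spectrum of the disk $D$ --- consistently with Lemma \ref{l:isospectrality}, since the pole $(1,0)$ lies outside the simply connected open disk, so that $A_{(1,0)}$ is gauge-equivalent to $0$ on $D$. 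By separation of variables, the Dirichlet eigenfunctions of $D$ are $J_{m}(z_{m,k}\,r)\cos(mt)$ for $m\geq0$ and $J_{m}(z_{m,k}\,r)\sin(mt)$ for $m\geq1$, where $z_{m,k}$ denotes the $k$-th positive zero of $J_m$ and only $J_m$ (not the singular solution of the radial equation) survives because of the regularity at the origin recalled in \cite{FelliFerreroTerracini2011,HHOO1999}; the first family is even in $x_2$ and the second is odd, whence
\[
\{\lambda_k^{ND}(1)\}_{k\geq1}=\big\{z_{m,k}^2:\ m\geq0,\ k\geq1\big\},\qquad
\{\lambda_k^{DN}(1)\}_{k\geq1}=\big\{z_{m,k}^2:\ m\geq1,\ k\geq1\big\}
\]
each rearranged in non-decreasing order. (Equivalently, both formulas follow by separating variables directly on $D^+=\{0<r<1,\ 0<t<\pi\}$, the Neumann condition on the diameter reading $\partial u/\partial t=0$ at $t=0,\pi$, which forces the angular modes $\sin(mt)$, $m\geq1$, in the Dirichlet case and $\cos(mt)$, $m\geq0$, in the mixed one.)

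Finally, I would invoke the interlacing of the Bessel zeros recalled before Remark \ref{r:ms} --- in particular Porter's theorem, which gives $z_{m,1}<z_{m+1,1}<z_{m,2}$ for every $m\geq0$ --- to conclude that $z_{1,1}$ is the smallest element of $\big\{z_{m,k}:m\geq1,\ k\geq1\big\}$ and the second smallest of $\big\{z_{m,k}:m\geq0,\ k\geq1\big\}$, the smallest one being $z_{0,1}$; hence $\lambda_1^{DN}(1)=z_{1,1}^2=\lambda_2^{ND}(1)$. I do not expect a genuine obstacle here: the points that need care are that the reflected (or separated) solutions exhaust the two spectra --- the orthogonal splitting of $H^1_0(D)$, respectively the completeness of the angular modes in $L^2(0,\pi)$ --- the exclusion of the second-kind Bessel functions via the $H^{1,a}$-regularity at the origin, and the comparison of Bessel zeros in the last step, which is precisely where the interlacing properties already recalled in the paper come into play.
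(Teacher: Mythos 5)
Your proposal is correct and follows exactly the route the paper indicates for this lemma, namely a direct computation via separation of variables and Bessel-type functions as in the proof of Lemma \ref{l:doubleeigenf}: at $t=1$ the two problems become the Dirichlet and the Neumann(diameter)--Dirichlet(arc) problems on $D^+$, whose spectra are $\{z_{m,k}^2\}_{m\geq1}$ and $\{z_{m,k}^2\}_{m\geq0}$, and the interlacing $z_{0,1}<z_{1,1}<z_{0,2}$ gives $\lambda_1^{DN}(1)=z_{1,1}^2=\lambda_2^{ND}(1)$. The paper leaves this computation to the reader, and your write-up supplies it correctly, including the points (completeness of the separated modes, exclusion of the singular radial solutions) that need care.
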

We note the latter result can be proved by direct computation, in terms of Bessel-type functions, 
as in the proof of Lemma \ref{l:doubleeigenf}.

Now, if $a=(t,0)$ let us denote $\lambda_j(t)$ the $j$-th eigenvalue of the problem \eqref{eq:eige}.
By Lemma \ref{l:isospectrality}, symmetry of the disk
and Remark \ref{r:monotonicity} (non-increasing monotonicity of the map $t\mapsto \lambda_1^{ND}(t)$), we have
\begin{equation}\label{eq:lambda1}
 \lambda_1(t) = \min\left \{ \lambda_1^{DN}(t),\ \lambda_1^{ND}(t) \right\} 
 = \min\left \{ \lambda_1^{ND}(-t),\ \lambda_1^{ND}(t) \right\} 
 = \lambda_1^{ND}(t) \quad \text{for any }t\in [0,1).
\end{equation}
We have as well
\begin{equation}\label{eq:lambda2}
 \lambda_2(t) = \min\left \{ \lambda_1^{DN}(t),\ \lambda_2^{ND}(t) \right\} = \lambda_1^{DN}(t) \quad \text{for any }t\in [0,1),
\end{equation}
where the last equivalence follows from Lemma \ref{l:continuity}, Remark \ref{r:monotonicity} and Lemma \ref{l:t=1},
recalling that $\lambda_2^{ND}(0)=\lambda_2^{DN}(0)>\lambda_1^{DN}(0)=\lambda_1^{ND}(0)$. 
Indeed, if by contradiction there exists $\bar t \in(0,1)$ such that $\lambda_2^{ND}(\bar t)< \lambda_1^{DN}(\bar t)$, 
then Remark \ref{r:monotonicity} implies 
$\lambda_2^{ND}(1)\leq \lambda_2^{ND}(\bar t)< \lambda_1^{DN}(\bar t)\leq \lambda_1^{DN}(1)$ which denies 
Lemma \ref{l:t=1}.

\section{Immediate splitting of the eigenvalue}\label{sec:split}

The aim of this section is to show that as the pole is moved, 
then the double eigenvalue split and produce two locally {\em different} analytic branches of eigenvalues. 
The first one is stricly monotone decreasing 
whereas the second one is stricly monotone increasing 
in a small neighborhood of the origin,
with respect to the distance of the pole from the origin.
In order to do this, we are going to exploit the results achieved in Section \ref{sec:explicit}.
In addition, by rotational symmetry, we will restrict ourselves to the case 
when the pole is moving along $x_1$-axis.

\subsection{ Analytic perturbation with respect to the pole}\label{sec:analytic}

As already pointed out in the Introduction (see also 
\cite[Section 2]{AbatangeloNys2017}, \cite{Lena2015}), 
as the pole moves not only the operator changes, but also this produces 
different variational settings: functional spaces depend on the position of the pole. 
In order to study the moving pole's effect on eigenvalues, 
first of all we need to define a family of diffeomorphisms which allow us 
to set the eigenvalue problem on a fixed domain, in the spirit of \cite{AbatangeloNys2017,Lena2015}.

We consider a particular case 
of the local perturbation introduced in \cite[Subsection 5.1]{AbatangeloNys2017}. 
Let $\xi \in C^{\infty}_c(\R^2)$ be a cut-off function such that
\begin{equation}\label{eq:xi}
0 \leq \xi \leq 1,\quad \xi \equiv 1 
\text{ on } D_{1/4}(0), 
\quad \xi \equiv 0 
\text{ on } \R^2\setminus D_{1/2}(0), 
\quad |\nabla \xi| \leq 16 
\text{ on }\R^2.
\end{equation}
For $a \in D_{1/4}(0)$, we define the local transformation 
$\Phi_a \in C^\infty (\R^2, \R^2)$ by 
\begin{equation}\label{eq:Phi_a}
\Phi_a (x) = x + a \xi(x).
\end{equation}
Notice that $\Phi_a(0) = a$ and that $\Phi_a'$ is a perturbation 
of the identity 
\[
\Phi_a' = I + a \otimes \nabla\xi = 
\begin{pmatrix}
1 + a_1 \frac{\partial \xi}{\partial x_1} 
& a_1 \frac{\partial \xi}{\partial x_2} \\
a_2 \frac{\partial \xi}{\partial x_1} 
& 1 + a_2 \frac{\partial \xi}{\partial x_2}
\end{pmatrix},
\] 
so that 
\begin{equation}\label{eq:det}
J_a(x):=\det(\Phi_a')= 1 + a_1 \frac{\partial \xi}{\partial x_1} 
+ a_2 \frac{\partial \xi}{\partial x_2}
= 1 + a \cdot \nabla \xi.
\end{equation}
Let $R = 1/128$. Then, if $a \in D_R(0)$, $\Phi_a$ 
is invertible, its inverse $\Phi_a^{-1}$ is also $C^\infty(\R^2, 
\R^2)$, see e.g. \cite[Lemma 1]{Micheletti1972}. 
Then, as in \cite[Section 7]{AbatangeloNys2017}, 
we define $\gamma_a \, : \, L^2(\Omega,\C) 
\to L^2(\Omega,\C)$ by
\begin{equation}\label{eq:gamma_a}
\gamma_a (u) = \sqrt{J_a} (u \circ \Phi_a),
\end{equation}
where $J_a$ is defined in \eqref{eq:det}. Such a transformation $\gamma_a$ 
defines an isomorphism preserving the scalar product in $L^2(\Omega,\C)$. 
Moreover, since $\Phi_a$ and $\sqrt{J_a}$ are $C^\infty$, $\gamma_a$ defines an algebraic 
and topological isomorphism of $H^{1,a}_0(\Omega,\C)$ in $H^{1,0}_0(\Omega,\C)$ 
and inversely with $\gamma_a^{-1}$, see \cite[Lemma 2]{Micheletti1972}.
We notice that $\gamma_a^{-1}$ writes
\[
\gamma_a^{-1} (u) = \left(\sqrt{J_a \circ \Phi_a^{-1}} \right)^{-1} 
( u \circ \Phi_a^{-1}).
\]

With a little abuse of notation we define the application $\gamma_a \, : \, 
( H^{1,a}_0(\Omega,\C) )^\star \to (H^{1,0}_0(\Omega,\C))^\star$ in 
such a way that 
\begin{equation} \label{eq:gamma-a-dual}
\phantom{a}_{( H^{1,0}_0(\Omega,\C) )^\star }\langle 
\gamma_a(f), v \rangle_{H^{1,0}_0(\Omega,\C)} 
= \phantom{a}_{( H^{1,0}_0(\Omega,\C) )^\star }\langle 
f, \gamma_a^{-1}(v)
\rangle_{H^{1,a}_0(\Omega,\C)},
\end{equation}
for any $f \in ( H^{1,a}_0(\Omega,\C) )^\star$, and inversely for 
$\gamma_a^{-1} \, : \, ( H^{1,0}_0(\Omega,\C) )^\star \to (H^{1,a}_0(\Omega,\C))^\star$.

We define the new operator $G_{a} 
\, : \, H^{1,0}_0(\Omega,\C) \to (H^{1,0}_0(\Omega,\C))^\star$ 
by the following relation
\begin{equation} \label{eq:operator-G-a}
G_{a} \circ \gamma_a = \gamma_a \circ (i \nabla + A_a)^2,
\end{equation}
being $\gamma_a$ defined in \eqref{eq:gamma_a} and \eqref{eq:gamma-a-dual}.
By \cite[Lemma 3]{Micheletti1972} the domain of definition of 
$G_{a}$ is given by $\gamma_a(H^{1,a}_0(\Omega, \C))$, 
it coincides with $H^{1,0}_0(\Omega,\C)$. Moreover, $G_{a}$ 
and $(i \nabla + A_a^\alpha)^2$ are \emph{spectrally equivalent}, 
in particular they have the same eigenvalues with the same multiplicity 
and the map
$a  \mapsto G_{a}$ is $C^\infty (D_R(0), BL( H^{1,0}_0(\Omega,\C), (H^{1,0}_0(\Omega,\C))^\star )$.

Now, let us consider the special case $a=(a_1,0)$, which 
means moving the pole just along the $x_1$-axis. 
For simplicity, in the following we denote
\[
 t:=a_1 \quad \text{and} \quad G_t:=G_{(a_1,0)}. 
\]
Then, following the same argument in \cite[Section 4]{Lena2015}, 
the family $t\mapsto G_{t}$ is an {\em analytic family of type (B) in the sense of Kato} 
with respect to the variable $t$. 
In order to prove it, by definition (see \cite[Chapter 7, Section 4]{Kato}) 
we need to show that the quadratic form $\mathfrak g_t$ associated to $G_t$, defined as
\[
 \mathfrak g_t(u)=\phantom{a}_{( H^{1,0}_0(\Omega) )^\star }\langle 
 G_t u,u
 \rangle_{H^{1,0}_0(\Omega)},
\]
is an {\em analytic family of type (a)
in the sense of Kato}, i.e. it fulfills the following two conditions: 
\begin{itemize}
 \item[(i)] the form domain is independent of $t$;
 \item[(ii)] the form $\mathfrak g_t(u)$ is analytic 
 with respect to the parameter $t$ for any $u$ in the form domain.
\end{itemize}
The first assertion follows from \eqref{eq:operator-G-a} (see \cite[Section 7.1]{AbatangeloNys2017}),
whereas the second one follows from \cite[Lemmas 5.1,5.2,7.1]{AbatangeloNys2017}
possibly shrinking the interval $(-R,R)$ where the parameter $t$ is varying.
%
The Kato-Rellich perturbation theory gives some information in the case when 
the considered eigenvalue is not
simple. 
Let $\lambda_0$ be any double eigenvalue of $G_0$. 
Then 
there exist a family of $2$ linearly independent 
$L^2(\Omega)$-normalized eigenfunctions $\{u_j (t)\}_{j=1,2}$ relative to the associated eigenvalue  
$\mu_j(t)$ for $j=1,2$ which depend analytically on the parameter $t$ and
such that for $j=1,2$ $\mu_j (0) = \lambda_0$ and 
$\mu_j (t)$ is an eigenvalue of the operator
$G_t$. 
We recall that $G_t$ has the same eigenvalues with the same multiplicity as operator $(i\nabla+A_{(t,0)})^2$.
Note that the 
$2$ functions $t \mapsto \mu_1(t),\ t\mapsto \mu_2 (t)$ are not {\em a priori} necessarily distinct. 
The Feynman-Hellmann formula (see \cite[Chapter VII, Section 3]{Kato}) then tells us that
\begin{equation}\label{eq:FHformula}
 \mu_j'(0) = \phantom{a}_{(H^{1,0}_0(\Omega,\C))^\star} \left\langle G'(0)[t]\,u_j(0), u_j(0) \right \rangle_{H^{1,0}_0(\Omega,\C)}.
\end{equation}

\subsection{Computing the derivative at $0$ of the two branches}

The aim of this subsection is showing that the two (\textit{a priori} not necessarily different) 
analytic branches $t\mapsto \mu_j(t)$, $j=1,2$, have a different derivative at $t=0$. 
In order to do this, we refer to the paper \cite{AbatangeloNys2017}. In particular, for $j=1,2$
\eqref{eq:FHformula} together with \cite[Lemma 8.2, Lemma 8.6]{AbatangeloNys2017} yield
\begin{equation}\label{eq:FHformula2}
 \mu_j'(0) = \phantom{a}_{(H^{1,0}_0(\Omega,\C))^\star} \left\langle
 G'(0)[t]\,u_j(0), u_j(0) 
 \right \rangle_{H^{1,0}_0(\Omega,\C)}
 = \frac\pi2 ({A_j}^2 - {B_j}^2) 
\end{equation}
where $A_j,B_j\in\R$ are the coefficients in the expansion \eqref{eq:coeff2}.

What is left is detecting $u_j(0)$ for $j=1,2$. To this aim, we are going to exploit the 
symmetry property of the domain with respect to the $x_1$-axis.
We refer to \cite{BNHHO09} and define the antiunitary antilinear operator $\Sigma:\ L^2 (D)\to L^2 (D)$
\[
\Sigma u := \bar u \circ \sigma,  
\]
being $\sigma(x_1,x_2)=(x_1,-x_2)$. 
We have that $\Sigma$ and $(i\nabla +A_0)^2$ commute (see \cite[Section 5]{BNHHO09}),
as well as $\Sigma $ and $K_0$. This means $L^2_{K_0}$ is stable under the action of $\Sigma$.
Thus, if we write 
\begin{equation*}L^2_{K,\Sigma}(\Omega):= L^2_K(\Omega)\cap \mbox{ker}(\Sigma-Id)
\qquad
L^2_{K,a\Sigma}(\Omega):= L^2_K(\Omega)\cap \mbox{ker}(\Sigma+Id),\end{equation*}
then we
have the orthogonal decomposition
\begin{equation}\label{eq:orthogonal}
L^2_K(\Omega)=L^2_{K,\Sigma}(\Omega)\oplus L^2_{K,a\Sigma}(\Omega).
\end{equation}
We can
therefore define the operators $(i\nabla +A_0)^2_{\Sigma}$ and $(i\nabla +A_0)^2_{a\Sigma}$,
restrictions of $(i\nabla +A_0)^2$ to $L^2_{K,\Sigma}(\Omega)$ and
$L^2_{K,a\Sigma}(\Omega)$ respectively. The spectrum of $(i\nabla +A_0)^2$ is the
union (counted with multiplicities) of the spectra of $(i\nabla +A_0)^2_{\Sigma}$
and $(i\nabla +A_0)^2_{a\Sigma}$. Lemma \ref{l:isospectrality} is then completed by the following result.
\begin{lemma}(\cite[Propositions 5.7 and 5.8]{BNHHO09})\label{l:isospectrality_eigenf}
 If $u$ is a $K_0$-real $\Sigma$-invariant eigenfunction of $(i\nabla +A_0)^2$ then the restriction to $D^+$
 of $e^{-\tfrac i2 \theta_0}u$ is a real eigenfunction of the Dirichlet--Neumann problem in \eqref{eq:dnnd}. 
  If $u$ is a $K_0$-real $a\Sigma$-invariant eigenfunction of $(i\nabla +A_0)^2$ then the restriction to $D^+$
 of $e^{-\tfrac i2 \theta_0}u$ is a real eigenfunction of the Neumann--Dirichlet problem in \eqref{eq:dnnd}.
 Conversely, if $v$ is an eigenfunction of the Dirichlet--Neumann problem in $D^+$, if 
 $\tilde v$ is the even extension of $u$ in $D$,
 the function $e^{\tfrac i2 \theta_0}\tilde v$ is a ($K_0$-real) $\Sigma$-invariant eigenfunction of $(i\nabla +A_0)^2$. 
 If $v$ is an eigenfunction of the Neumann--Dirichlet problem in $D^+$, if 
 $\tilde v$ is the odd extension of $u$ in $D$,
 the function $e^{\tfrac i2 \theta_0}\tilde v$ is a ($K_0$-real) $a\Sigma$-invariant eigenfunction of $(i\nabla +A_0)^2$. 
\end{lemma}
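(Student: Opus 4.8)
\emph{Overall strategy.} I would prove both implications by untwisting the magnetic potential on the simply connected half-disk $D^+$ and then reading off the boundary conditions on the diameter from the combined effect of $K_0$-reality and the reflection symmetry. The algebraic heart is that for $\alpha=\tfrac12$ one has $2A_0=\nabla\theta_0$, so that on any simply connected open set not meeting the positive $x_1$-axis a smooth branch of $\theta_0$ exists and $(i\partial_j+(A_0)_j)(e^{i\theta_0/2}w)=e^{i\theta_0/2}\,i\partial_j w$; iterating this gives
\[
(i\nabla+A_0)^2\big(e^{i\theta_0/2}w\big)=-\,e^{i\theta_0/2}\,\Delta w .
\]
Thus, given a $K_0$-real eigenfunction $u$ of $(i\nabla+A_0)^2$ with eigenvalue $\lambda$, I would set $v:=e^{-i\theta_0/2}u$ on $D^+$ (a smooth branch of $\theta_0$ being available there); then $-\Delta v=\lambda v$ in $D^+$, and the identity $e^{i\theta_0}\overline u=u$ is \emph{equivalent} to $\overline v=v$, i.e.\ $v$ is real-valued. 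On the circular part of $\partial D^+$ the condition $u=0$ gives $v=0$.

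\emph{Boundary conditions on the diameter.} It remains to identify the conditions satisfied by $v$ on the two segments $\ell_-:=(-1,0)\times\{0\}$ and $\ell_+:=(0,1)\times\{0\}$, on which $\theta_0\equiv\pi$ and $\theta_0\equiv0$ respectively. On $\ell_+$ the relation $e^{i\theta_0}\overline u=u$ forces $u$ to be real, and on $\ell_-$ it forces $u$ to be purely imaginary; on the other hand $\Sigma u=\pm u$ forces $\overline u=\pm u$ on both segments, since $\sigma$ fixes them pointwise. In the $\Sigma$-invariant case these two facts are compatible on $\ell_+$ but contradictory on $\ell_-$ unless $u\equiv0$ there, which is exactly the Dirichlet condition $v=0$ on $\ell_-$; the complementary Neumann condition $\partial_\nu v=0$ on $\ell_+$ is then not an extra hypothesis but a consequence of the smoothness of the eigenfunction $u$ across the interior segment $\ell_+$ once the gauge is untwisted there. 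In the $a\Sigma$-invariant case the roles of $\ell_-$ and $\ell_+$ are exchanged, producing the Neumann--Dirichlet problem. This yields the two direct assertions.

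\emph{The converse.} Conversely, starting from a real eigenfunction $v$ of the Dirichlet--Neumann (resp.\ Neumann--Dirichlet) problem on $D^+$, I would extend $v$ across the diameter by the reflection whose parity, combined with the jump by $-1$ of $e^{i\theta_0/2}$ across $\ell_+$, makes $u:=e^{i\theta_0/2}\tilde v$ continuous on $D\setminus\{0\}$: across the Dirichlet segment this is the classical odd reflection of a function vanishing there, and across the Neumann segment it is, relative to a locally smooth branch of the angle, the classical even reflection of a function with vanishing normal derivative. Hence in a neighbourhood of either segment $u$ equals a smooth non-vanishing gauge factor times a classical Schwarz-reflected solution of $-\Delta w=\lambda w$, so $(i\nabla+A_0)^2u=\lambda u$ holds weakly across the whole diameter (and across the circular boundary of $D^+$, where $v=0$), while the regularity of $u$ at the pole is that of any element of $H^{1,0}_0(D,\C)$ (using the known regularity of Aharonov--Bohm eigenfunctions, cf.\ \cite{FelliFerreroTerracini2011,HHOO1999}). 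Finally $u$ is $K_0$-real because $v$ is real and because of the $e^{i\theta_0}$ twist, and it is $\Sigma$- (resp.\ $a\Sigma$-) invariant because $\tilde v$ has the corresponding parity under $\sigma$ and $\theta_0\circ\sigma=2\pi-\theta_0$.

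\emph{Main difficulty.} The only genuinely delicate point is the bookkeeping of the double-valued factor $e^{i\theta_0/2}$, whose branch cut necessarily runs along one of the two segments: one cannot use a single smooth gauge on $D^+$ up to the diameter, and it is precisely the matching of the $\pm1$ jump of $e^{i\theta_0/2}$ with the parity of the reflected extension that distinguishes the Dirichlet--Neumann from the Neumann--Dirichlet case, and that must be checked to conclude that $u$ solves the eigenvalue equation in the distributional sense across the Neumann segment, where $\tilde v$ itself is discontinuous although $u$ is not. Once the signs are organised correctly, everything else is a routine verification.
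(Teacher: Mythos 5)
The paper does not actually prove this lemma: it is quoted, with attribution, from Propositions 5.7 and 5.8 of \cite{BNHHO09}, so any argument you supply is necessarily a ``different route'' from the paper's own (nonexistent) proof. That said, your gauge-untwisting argument --- using $2A_0=\nabla\theta_0$ to get $(i\nabla+A_0)^2(e^{i\theta_0/2}w)=-e^{i\theta_0/2}\Delta w$ on simply connected sets avoiding the branch cut, translating $K_0$-reality into realness of $v=e^{-i\theta_0/2}u$, and reading the Dirichlet condition on one half of the diameter from the incompatibility of ``$u$ real there'' (from $\Sigma u=\pm u$ on the fixed line of $\sigma$) with ``$u$ purely imaginary there'' (from $e^{i\theta_0}\bar u=u$ where $\theta_0=\pi$) --- is the standard proof and is consistent with the explicit eigenfunctions \eqref{eq:coeff2}. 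Your converse is also organised correctly, and you rightly single out the matching of the $\pm1$ jump of $e^{i\theta_0/2}$ with the parity of the reflected extension as the delicate point: with the paper's convention $\theta_0\in[0,2\pi)$ (cut along the positive $x_1$-axis, i.e.\ along the Neumann segment of the Dirichlet--Neumann problem at $t=0$), it is precisely the requirement that $u=e^{i\theta_0/2}\tilde v$ be continuous, and not the literal words ``even/odd extension'', that pins down the correct global extension, so your formulation is the safe one.

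One sentence of the direct implication needs repair. You assert that the Neumann condition $\partial_\nu v=0$ on $\ell_+$ is ``a consequence of the smoothness of $u$ across $\ell_+$ once the gauge is untwisted''. Smoothness alone is not enough: for a general $K_0$-real eigenfunction $u=e^{it/2}J_{n/2}(\sqrt{\lambda}\,r)\bigl(A\cos\tfrac{nt}{2}+B\sin\tfrac{nt}{2}\bigr)$ the function untwisted near $\ell_+$ is smooth and real, yet its normal derivative on $\ell_+$ is proportional to $B$ and need not vanish. What produces the Neumann condition is the $\Sigma$-invariance: taking a local branch $\tilde\theta_0\in(-\pi,\pi)$ near $\ell_+$, one has $\tilde\theta_0\circ\sigma=-\tilde\theta_0$, and combining $\bar u\circ\sigma=u$ with $e^{i\tilde\theta_0}\bar u=u$ yields $w\circ\sigma=w$ for $w=e^{-i\tilde\theta_0/2}u$; it is the evenness of $w$ across the fixed line, not its mere smoothness, that forces $\partial_{x_2}w=0$ on $\ell_+$. (Symmetrically, $a\Sigma$-invariance makes the function untwisted near $\ell_-$ even under $\sigma$, giving the Neumann condition there.) Since your stated strategy already announces that the boundary conditions come from ``the combined effect of $K_0$-reality and the reflection symmetry'', this is a one-line fix rather than a structural flaw, but as written that step would fail.
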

In view of \eqref{eq:gamma_a} and \eqref{eq:operator-G-a} we have  
that $u_1(0)$ and $u_2(0)$ are two $K_0$-real linearly independent eigenfunctions of $(i\nabla+A_0)^2$.
Therefore via \eqref{eq:orthogonal}, Lemma \ref{l:isospectrality_eigenf} and Lemma \ref{l:doubleeigenf}
$u_1(0)$ is $a\Sigma$-invariant whereas 
$u_2(0)$ is $\Sigma$-invariant.
From Lemma \ref{l:doubleeigenf}, Remark \ref{r:ms} and the asymptotic expansion of the Bessel functions 
(see e.g. \cite[Chapter 3]{Watson}) there exist $A,B\in\R\setminus \{0\}$ such that
 \begin{align}
  u_1(r\cos t,r\sin t)=  e^{i \frac{t}{2}} r^{1/2} 
\,B \sin \frac{t}{2} + O(r^{\tfrac32}) \quad \text{ as } r \to 0^+ \\
u_2(r (\cos t, \sin t)) =   e^{i \frac{t}{2}} r^{1/2} 
\,A \cos \frac{t}{2} + O(r^{\tfrac32}) \quad \text{ as } r \to 0^+.
 \end{align}
Equations \eqref{eq:FHformula} and \eqref{eq:FHformula2} immediately give
\begin{align}
 &\mu_1'(0) = - \frac\pi2 \,{B}^2 \ <\ 0,\\
 &\mu_2'(0) =  \frac\pi2 \,{A}^2 \ >\ 0,
\end{align}
thus concluding the first step towards our main result.

\section{Conclusion}\label{sec:monotone}

We are now in position to conclude the proof of our main result. 
\begin{proof}[Proof of Theorem \ref{t:main}]
Thanks to rotational invariance of eigenvalues, it is sufficient to prove that if  
$a=(t,0)$ and $\lambda_1(t)$ is the first eigenvalue of the problem \eqref{eq:eige}, 
which is double for $t=0$, then $\lambda_1(t)$ is simple for any $t\in(0,1)$. 

By the results of Section \ref{sec:split}, there exists $\delta>0$ 
such that the two 
analytic eigenbranches $\mu_1(t)$ and $\mu_2(t)$ are different for $t\in(-\delta,\delta)$, since 
\begin{equation}\label{eq:derivate}
\mu_1'(0)<0 \quad \text{whereas} \quad  \mu_2'(0)>0. 
\end{equation}
Moreover, we have that 
\begin{equation}\label{eq:rami}
 \lambda_1(t) = 
 \begin{cases}
  \mu_2(t) &\text{ for }t\in(-\delta,0]\\
  \mu_1(t) &\text{ for }t\in[0,\delta),
 \end{cases}
\end{equation}
since $\mu_j(t)$ are eigenvalues of the operator $G_t$ which is spectral equivalent to $(i\nabla +A_a)^2$
with $a=(t,0)$.
In order to prove that it is simple for $t\in(0,1)$, it will be sufficient to prove that 
$\lambda_1(t)<\lambda_2(t)$ for $t\in(0,1)$. 
This is guaranteed by \eqref{eq:rami}, \eqref{eq:derivate},\eqref{eq:lambda1}, \eqref{eq:lambda2} and Remark \ref{r:monotonicity}.
This concludes the proof of Theorem \ref{t:main}. 
\end{proof}

\begin{figure}[h]
\begin{center}
\psset{unit=0.5cm}
\begin{pspicture}(-10,0)(10,10)
\psaxes*[labels=none,ticks=none]{->}(0,0)(-10,0)(10,10)
\uput[270](10,0){$t$}
\uput[270](0,5){$\lambda_1^{ND}(0)=\lambda_1^{DN}(0)$}
\uput[0](3.5,6){$\lambda_{1}^{DN}(t)$}
\uput[0](-6.5,6){$\lambda_{1}^{ND}(t)$}
\uput[0](4.3,1.5){$\lambda_{1}^{ND}(t)$}
\uput[0](-5.3,1.5){$\lambda_{1}^{DN}(t)$}
\psline[linewidth=.5pt,linestyle=dashed](8,0)(8,7)
\psline[linewidth=.5pt,linestyle=dashed](-8,0)(-8,7)
\psline[linewidth=.5pt,linestyle=dashed](-8,7)(8,7)
\psdot[dotstyle=*](0,4.4)
\uput[270](8,0){$1$}
\uput[270](-8,0){$-1$}
\uput[270](0,0){$0$}
\pscurve(-8,0.8)(-7,1)(6,6.9)(7,7)
\pscurve(-7,7)(-6,6.9)(7,1)(8,0.8)
\pscurve(0,9)(0.8,9)(6,7)(7,7)
\psline(7,7)(8,7)
\pscurve(-7,7)(-6,7)(-0.8,9)(0,9)
\psline(-7,7)(-8,7)
\psdot[dotstyle=*](0,9)
\uput[90](0,8.6){$\lambda_2^{ND}(0)=\lambda_2^{DN}(0)$}
\uput[0](3.5,8){$\lambda_{2}^{ND}(t)$}
\uput[0](-7.5,8){$\lambda_{2}^{DN}(t)$}
\end{pspicture}
\end{center}
\caption{The double first Aharonov--Bohm eigenvalue $\lambda_1(0)$ splits in two different branches of simple eigenvalues 
up to the boundary.}\label{fig:1}
\end{figure}
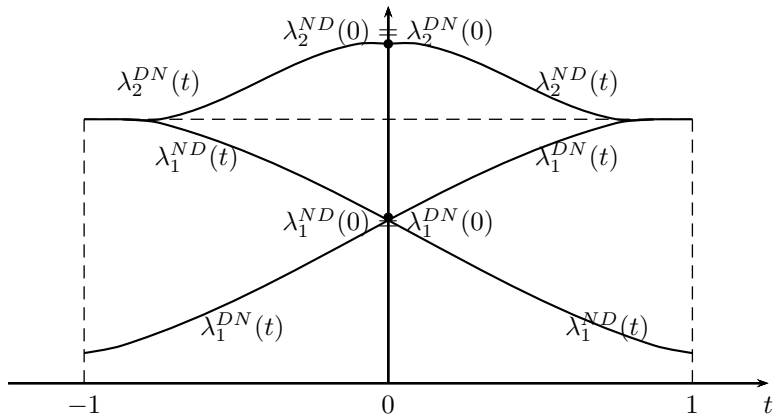

\section*{Acknowledgements}

The author would like to thank dr. Manon Nys for many discussions,
as well as the anonymous referee of the paper \cite{AbatangeloNys2017} 
who excited our interest about the theme. 

The author is
partially supported by the project ERC Advanced Grant
2013 n. 339958: ``Complex Patterns for Strongly Interacting Dynamical
Systems -- COMPAT'', by the PRIN2015 grant ``Variational methods, with
applications to problems in mathematical physics and geometry''
and by the 2017-GNAMPA project ``Stabilità e analisi
  spettrale per problemi alle derivate parziali''.


\begin{thebibliography}{10}

\bibitem{Abatangelo2016}
L. Abatangelo.
\newblock Sharp asymptotics for the eigenvalue function of Aharonov--Bohm operators with a moving pole. 
\newblock {\em Rend. Sem. Mat. Univ. Politec. Torino
Bruxelles-Torino Talks in PDE’s Turin, May 2–5, 2016}, Vol. 74, 2: 19--29, 2016.

\bibitem{AbatangeloFelli2015}
L.~Abatangelo and V.~Felli.
\newblock Sharp asymptotic estimates for eigenvalues of {A}haronov--{B}ohm
  operators with varying poles.
\newblock {\em Calc. Var. Partial Differential Equations}, 54(4):3857--3903,
  2015.

\bibitem{AbatangeloFelli2016}
L.~Abatangelo and V.~Felli.
\newblock On the leading term of the eigenvalue variation for
  {A}haronov--{B}ohm operators with a moving pole.
\newblock {\em SIAM J. Math. Anal.}, 48(4):2843--2868, 2016.

\bibitem{AbatangeloFelliLena2016}
L.~Abatangelo, V.~Felli, and C.~Lena.
\newblock On {A}haronov--{B}ohm operators with two colliding poles.
\newblock {\em Advanced Nonlin. Studies}, 17:283--296, 2017.


\bibitem{AbatangeloFelliNorisNys2016}
L.~Abatangelo, V.~Felli, B.~Noris, and M.~Nys.
\newblock Sharp boundary behavior of eigenvalues for {A}haronov--{B}ohm
  operators with varying poles.
\newblock {\em J. Funct. Anal.}, 273: 2428--2487, 2017.
  
\bibitem{AbatangeloNys2017}
L. Abatangelo, M. Nys.
\newblock On multiple eigenvalues for Aharonov--Bohm operators in planar domains.
\newblock {\em Nonlinear Analysis}, 2017, https://doi.org/10.1016/j.na.2017.11.010.Q3

\bibitem{AdamiTeta1998}
R.~Adami and A.~Teta.
\newblock On the {A}haronov-{B}ohm {H}amiltonian.
\newblock {\em Lett. Math. Phys.}, 43(1):43--53, 1998.

\bibitem{AB}
Y.~Aharonov and D.~Bohm.
\newblock Significance of electromagnetic potentials in the quantum theory.
\newblock {\em Phys. Rev. (2)}, 115:485--491, 1959.

\bibitem{Balinsky}
A.~A.~Balinsky. 
\newblock Hardy type inequalities for Aharonov-Bohm magnetic potentials with multiple singularities. 
\newblock {\em Math. Res. Lett.} 10, no. 2-3:69--176, 2003.

\bibitem{BonnaillieHelffer2013}
V.~Bonnaillie-No\"el and B.~Helffer.
\newblock On spectral minimal partitions: the disk revisited.
\newblock {\em Ann. Univ. Buchar. Math. Ser.}, 4(LXII)(1):321--342, 2013.

\bibitem{BNHHO09} 
V.~Bonnaillie-No\"el, B.~Helffer, T.~Hoffmann-Ostenhof, 
\newblock Aharonov-Bohm Hamiltonians, isospectrality and minimal partitions. 
\newblock {\em J. Phys. A (18)}, 42:185--203, 2009.
 
\bibitem{BNH11} 
V. Bonnaillie-No\"el, B. Helffer. 
\newblock Numerical analysis of nodal sets for eigenvalues of Aharonov-Bohm Hamiltonians on the square with application to minimal
    partitions. 
\newblock {\em Exp. Math.}, 20 no. 3: 304--322, 2011.

\bibitem{BNL14} 
V. Bonnaillie-No\"el, C. L\'{e}na. 
\newblock Spectral minimal partitions of a sector. 
\newblock {\em Discrete Contin. Dyn. Syst. Ser. B}, 19 no. 1: 27--53, 2014.

\bibitem{BonnaillieNorisNysTerracini2014}
V.~Bonnaillie-No{\"e}l, B.~Noris, M.~Nys, and S.~Terracini.
\newblock On the eigenvalues of {A}haronov--{B}ohm operators with varying
  poles.
\newblock {\em Anal. PDE}, 7(6):1365--1395, 2014.

\bibitem{DaugeHelffer1993}
M. Dauge, B. Helffer,
\newblock Eigenvalues variation. II. Multidimensional problems.
\newblock {\em J. Diff. Eq.} 104: 263--297, 1993

\bibitem{FelliFerreroTerracini2011}
V.~Felli, A.~Ferrero, and S.~Terracini.
\newblock Asymptotic behavior of solutions to {S}chr\"odinger equations near an
  isolated singularity of the electromagnetic potential.
\newblock {\em J. Eur. Math. Soc. (JEMS)}, 13(1):119--174, 2011.

\bibitem{HHOO1999}
B.~Helffer, M.~Hoffmann-Ostenhof, T.~Hoffmann-Ostenhof, and M.~P. Owen.
\newblock Nodal sets for groundstates of {S}chr\"odinger operators with zero
  magnetic field in non-simply connected domains.
\newblock {\em Comm. Math. Phys.}, 202(3):629--649, 1999.

\bibitem{H10} B. Helffer. 
\newblock On spectral minimal partitions: a survey. 
\newblock {\em Milan J. Math.}, 78, no. 2: 575--590, 2010.

\bibitem{HHO10} 
B. Helffer, T. Hoffmann-Ostenhof. 
\newblock On minimal partitions: new properties and applications to the disk.
\newblock {\em Spectrum and dynamics, CRM Proc. Lecture Notes}, 52,
  Amer. Math. Soc., Providence, RI: 119--135, 2010.

\bibitem{HHO13} 
B. Helffer, T. Hoffmann-Ostenhof.
\newblock On a magnetic characterization of spectral minimal partitions.
\newblock {\em J. Eur. Math. Soc. (JEMS)}, 15, no. 6, 2081--2092, 2013.

\bibitem{HHOT09} 
B. Helffer, T. Hoffmann-Ostenhof, S. Terracini. 
\newblock Nodal domains and spectral minimal partitions.
\newblock {\em Ann. Inst. H. Poincar\'{e} Anal. Non Lin\'{e}aire}, 26:101--138, 2009.

\bibitem{HHOT10'} 
B. Helffer, T. Hoffmann-Ostenhof, S. Terracini.
\newblock Nodal minimal partitions in dimension 3. 
\newblock {\em Discrete Contin. Dyn. Syst.}, 28, no. 2: 617--635, 2010.

\bibitem{HHOT10} 
B. Helffer, T. Hoffmann-Ostenhof, S. Terracini. 
\newblock On spectral minimal partitions: the case of the sphere. 
\newblock {\em Around the research of Vladimir Maz'ya. III, 
  Int. Math. Ser. (N. Y.)}, 13, Springer, New York, 153--178, 2010.

\bibitem{Kato}
T.~Kato.
\newblock Perturbation theory for linear operators. Reprint of the 1980 edition. Classics in Mathematics. 
\newblock {\em Springer-Verlag, Berlin}, xxii+619 pp.,  1995.

\bibitem{LaptevWeidl1999}
A.~Laptev and T.~Weidl.
\newblock Hardy inequalities for magnetic {D}irichlet forms.
\newblock In {\em Mathematical results in quantum mechanics ({P}rague, 1998)},
  volume 108 of {\em Oper. Theory Adv. Appl.}, pages 299--305. Birkh\"auser,
  Basel, 1999.

\bibitem{Lena2015}
C.~L\'ena.
\newblock Eigenvalues variations for {A}haronov--{B}ohm operators.
\newblock {\em J. Math. Phys.}, 56(1):011502, 18, 2015.

\bibitem{MOR}
M.~Melgaard, E.~M. Ouhabaz, and G.~Rozenblum.
\newblock Negative discrete spectrum of perturbed multivortex
  {A}haronov--{B}ohm {H}amiltonians.
\newblock {\em Ann. Henri Poincar\'e}, 5(5):979--1012, 2004.

\bibitem{Micheletti1972}
A.~M. Micheletti.
\newblock Perturbazione dello spettro dell'operatore di {L}aplace, in relazione
  ad una variazione del campo.
\newblock {\em Ann. Scuola Norm. Sup. Pisa (3)}, 26:151--169, 1972.


\bibitem{NorisNysTerracini2015}
B.~Noris, M.~Nys, and S.~Terracini.
\newblock On the {A}haronov--{B}ohm operators with varying poles: the boundary
  behavior of eigenvalues.
\newblock {\em Comm. Math. Phys.}, 339(3): 1101--1146, 2015.

\bibitem{NT2010} B. Noris, S. Terracini. 
\newblock Nodal sets of magnetic Schr\"odinger operators of Aharonov-Bohm type and energy minimizing partitions. 
\newblock {\em Indiana University Mathematics Journal (4)}, 59: 1361--1403, 2010.
  
\bibitem{Teytel1999} 
M. Teytel.
\newblock How rare are multiple eigenvalues?
\newblock {\em Communications on Pure and Applied Mathematics}, 52: 917--934, 1999.

\bibitem{SautTemam1979}
J.~C. Saut, R. Temam.
\newblock Generic properties of nonlinear boundary value problems.
\newblock {\em Comm. Partial Differential Equations (3)}, 4: 293--319, 1979.

\bibitem{Watson}
G.N. Watson.
\newblock A treatise on the theory of the Bessel functions.
\newblock {\em Cambridge at the University press}, 1944. 

\end{thebibliography}
\end{document}